\documentclass[11pt]{article}
\usepackage[margin=1in]{geometry}
\usepackage{amsmath,amssymb,amsthm,mathtools}
\usepackage{hyperref}
\newtheorem{theorem}{Theorem}
\newtheorem{corollary}[theorem]{Corollary}
\newtheorem{proposition}[theorem]{Proposition}
\theoremstyle{definition}

\newcommand{\R}{\mathbb R}
\newcommand{\vol}{\operatorname{vol}}

\title{From the Steiner Inellipse to the John Ellipsoid of a Simplex:\\A Corner-Volume Characterization}
\author{Anatoly Eydelzon\\
Department of Mathematical Sciences\\
The University of Texas at Dallas\\
Richardson, Texas 75080, USA\\
\textit{Corresponding author:} \texttt{anatoly@utdallas.edu}}
\date{}

\begin{document}
\maketitle

\begin{abstract}
For a triangle of area \(T\), an earlier planar corner-area characterization
says that an interior point \(M\) lies on the Steiner
inellipse exactly when the three corner triangles, cut off by the lines
through \(M\) parallel to the sides, have areas \(T_1,T_2,T_3\) satisfying
\[
T_1+T_2+T_3=\frac12T.
\]
The same idea works in every dimension.  Let \(S\) be a nondegenerate \(n\)-simplex of volume \(V\), and let \(V_1(M),\ldots,V_{n+1}(M)\) be the volumes of the facet-parallel corner simplexes determined by an interior point \(M\). Then
\[
M\in\partial E_J(S)
\quad\Longleftrightarrow\quad
\sum_{i=1}^{n+1}V_i(M)^{2/n}=\frac1nV^{2/n},
\]
where \(E_J(S)\) is the John ellipsoid of \(S\).  Thus the boundary can be
recognized from the corner volumes alone.  There is no need to first write an
equation for the ellipsoid.  Along the way we also see that the corner-volume
functional is just a rescaled central second-moment quadratic for the uniform
simplex.  The classical ingredients are barycentric coordinates, the
covariance formula for a simplex, and the affine description of its John
ellipsoid.  What is new is putting them together in the corner-volume
characterization above and noticing that the planar result is exactly the
two-dimensional case.
\end{abstract}

\noindent\textbf{Keywords:}
John ellipsoid; simplex; Steiner inellipse; barycentric coordinates;
corner volumes; covariance matrix.

\medskip
\noindent\textbf{Mathematics Subject Classification (2020):}
52A20, 52A38.

\section{From the planar identity to a simplex}
Let
\[
S=\operatorname{conv}(P_1,\ldots,P_{n+1})\subset\R^n
\]
be a nondegenerate simplex and let \(M\) be an interior point.  Through \(M\) we draw \(n+1\) hyperplanes parallel to the facets of \(S\).  At each vertex \(P_i\), these cut off a small simplex homothetic to \(S\); we call its volume \(V_i(M)\) and write \(V=\vol(S)\).

Can we decide from these \(n+1\) volumes whether \(M\) lies on the John
ellipsoid?  The usual descriptions of the ellipsoid use contact points, an
affine image of a regular simplex, or a quadratic equation.  Formula
\eqref{eq:main} gives a test using only the corner volumes and the volume of the
original simplex.  We do not need to know coordinates, axis directions, or the
covariance matrix in advance.

If \(\lambda_1,\ldots,\lambda_{n+1}\) are the barycentric coordinates of \(M\), the linear homothety ratio of the \(i\)th corner simplex is simply \(\lambda_i\).  Consequently
\begin{equation}\label{eq:corner}
\frac{V_i(M)}{V}=\lambda_i^n,
\qquad
\left(\frac{V_i(M)}{V}\right)^{2/n}=\lambda_i^2.
\end{equation}
The exponent \(2/n\) is forced by geometry: the \(n\)th root turns volume into a linear ratio, and the square gives a quadratic quantity that defines an ellipsoid.

Our starting point is the planar result from \cite{Eydelzon2020}: for a triangle,
\begin{equation}\label{eq:old}
M\in\partial E_{\rm St}
\quad\Longleftrightarrow\quad
T_1+T_2+T_3=\frac12T,
\end{equation}
where \(E_{\rm St}\) is the Steiner inellipse.  The Steiner inellipse is the ellipse of maximal area inside a triangle, so it is precisely the planar John ellipsoid (John~\cite{John1948}; see also Ball~\cite{Ball1992,Ball1997}).  The same statement holds in every dimension.

\begin{theorem}[Corner-volume characterization]\label{thm:main}
For every nondegenerate \(n\)-simplex \(S\),
\begin{equation}\label{eq:main}
M\in\partial E_J(S)
\quad\Longleftrightarrow\quad
\sum_{i=1}^{n+1}V_i(M)^{2/n}=\frac1nV^{2/n}.
\end{equation}
\end{theorem}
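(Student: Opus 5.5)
By \eqref{eq:corner}, the right-hand condition in \eqref{eq:main} is equivalent to
\[
\sum_{i=1}^{n+1}\lambda_i^2=\frac1n ,
\]
where \(\lambda_1,\ldots,\lambda_{n+1}\) are the barycentric coordinates of \(M\), so \(\sum\lambda_i=1\). The task is therefore to show that \(\partial E_J(S)\), written in barycentric coordinates, is exactly this quadric intersected with the hyperplane \(\sum\lambda_i=1\).

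The plan is to use affine invariance. An affine map \(A\) sends simplexes to simplexes and carries \(E_J(S)\) to \(E_J(AS)\), since the ratio of volumes is preserved and the maximal-volume inscribed ellipsoid is unique. It also preserves barycentric coordinates, hence preserves the functional \(\sum\lambda_i^2\). So it suffices to treat one convenient simplex. I will take the standard simplex in the hyperplane \(H=\{x\in\R^{n+1}:\sum x_i=1\}\), with vertices \(e_1,\ldots,e_{n+1}\). This is a regular \(n\)-simplex, and the barycentric coordinates of a point are just its coordinates: \(\lambda=x\).

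For the regular simplex, the symmetry group acts irreducibly on the directions of \(H\), so the uniqueness of \(E_J\) forces it to be a Euclidean ball centered at the centroid \(c=(1/(n+1),\ldots,1/(n+1))\). Its radius is the inradius \(r\), because a ball centered at \(c\) lies in \(S\) exactly when its radius is at most the distance from \(c\) to each facet \(\{x_i=0\}\cap H\). That distance is the distance from \(c\) to the nearest point of \(H\) with \(x_i=0\). Minimizing \(|x-c|^2\) subject to \(x_i=0\) and \(\sum x=1\) gives \(x_j=1/n\) for \(j\ne i\), so
\[
r^2=\frac{1}{(n+1)^2}+n\Bigl(\frac1n-\frac1{n+1}\Bigr)^2=\frac{1}{n(n+1)}.
\]
For \(x\in H\) we also have
\[
|x-c|^2=\sum x_i^2-\frac{1}{n+1}.
\]
Hence \(x\in\partial E_J\) if and only if
\[
\sum x_i^2=\frac{1}{n+1}+\frac{1}{n(n+1)}=\frac1n ,
\]
which is the claimed condition. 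Transporting this back by an affine map gives \eqref{eq:main} for every nondegenerate simplex. Equivalently, this recovers the affine description of \(E_J(S)\) as the image of the insphere of a regular simplex.

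The main obstacle is justifying rigorously that \(E_J\) of the regular simplex is its inball. I would do this by appealing to uniqueness of the John ellipsoid, or to John's theorem with the \(n+1\) contact points at the facet centroids. Given uniqueness, invariance under the symmetric group \(S_{n+1}\), which acts irreducibly on \(H_0=\{\sum x_i=0\}\), forces the ellipsoid's shape matrix to be scalar by Schur's lemma, and it forces the center to be the unique fixed point \(c\). I also need a little care that \(\partial E_J\) lies in the interior of \(S\) except at the contact points. Those contact points satisfy \(\sum\lambda_i^2=1/n\) with some \(\lambda_i=0\), so they are boundary points of \(S\) rather than interior points, and the statement for interior \(M\) is unaffected.

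Finally, I would note that for \(n=2\) the condition reads \(\sum T_i=\tfrac12T\), which recovers \eqref{eq:old}.
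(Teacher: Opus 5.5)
Your proposal is correct and follows the same route as the paper. Both reduce, by an affine map that preserves barycentric coordinates and carries John ellipsoids to John ellipsoids, to a regular simplex, where \(E_J\) is the insphere with barycentric equation \(\sum_i\lambda_i^2=1/n\), and then translate through \(V_i/V=\lambda_i^n\). The only difference is that you prove what the paper takes from Lin--Ge--Leng: your uniqueness-plus-symmetry (Schur) argument shows that \(E_J\) of the regular simplex is its inball, and your computation \(r^2=1/(n(n+1))\) derives the equation \(\sum_i\lambda_i^2=1/n\).
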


\begin{proof}
First suppose that \(S\) is regular.  In barycentric coordinates the insphere of \(S\) is the locus
\[
\sum_{i=1}^{n+1}\lambda_i^2=\frac1n.
\]
Using \eqref{eq:corner} we get \eqref{eq:main}.  For an arbitrary simplex we take a nonsingular affine map which sends a regular simplex onto \(S\).  Such a map preserves barycentric coordinates and sends the insphere of the regular simplex to the John ellipsoid of \(S\) (see the simplex form of John theory in Lin--Ge--Leng~\cite{LinGeLeng2006}).  It also multiplies \(V\) and every \(V_i\) by the same constant factor, so equality \eqref{eq:main} is affine-invariant.
\end{proof}

When \(n=2\) the exponent \(2/n\) is equal to \(1\), and Theorem~\ref{thm:main} becomes exactly \eqref{eq:old}.  This is the reason why the especially simple unpowered corner-area identity appears in the plane.

The theorem is a new volume form of the classical John ellipsoid of a simplex,
not a new construction of the ellipsoid.  Its point is that the boundary can
be found from a finite set of corner volumes.  The formula is also
affine-invariant and does not depend on a choice of coordinates.

\section{Corner volumes and second moments}
The corner-volume expression carries more information than only a single John level set.  Let
\[
G=\frac1{n+1}\sum_{i=1}^{n+1}P_i
\]
be the centroid (it is also the center of mass of the uniform simplex), and let \(C\) be its covariance matrix.  The covariance identity used below is standard; the new observation is that, after we substitute \eqref{eq:corner}, it becomes a direct identity between facet-parallel corner volumes and the central second moment.

\begin{proposition}[Corner-volume/second-moment identity]\label{prop:moment}
For every interior point \(M\),
\begin{equation}\label{eq:moment}
\sum_{i=1}^{n+1}
\left(\frac{V_i(M)}{V}\right)^{2/n}
=
\frac1{n+1}
+
\frac{1}{(n+1)(n+2)}
(M-G)^TC^{-1}(M-G).
\end{equation}
\end{proposition}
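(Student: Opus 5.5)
The plan is to convert both sides of \eqref{eq:moment} into barycentric quantities. By \eqref{eq:corner} the left-hand side equals \(\sum_i\lambda_i^2\), where \(\lambda_1,\ldots,\lambda_{n+1}\) are the barycentric coordinates of \(M\). Put \(v_i=P_i-G\), so that \(\sum_i v_i=0\), and put \(\mu_i=\lambda_i-\frac1{n+1}\), so that \(\sum_i\mu_i=0\). Expanding the square and using \(\sum_i\lambda_i=1\) gives
\[
\sum_i\lambda_i^2=\frac1{n+1}+\sum_i\mu_i^2 .
\]
The constant term \(\frac1{n+1}\) therefore already matches. It remains to show that
\[
\sum_i\mu_i^2=\frac1{(n+1)(n+2)}(M-G)^TC^{-1}(M-G).
\]

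\emph{Step 1 (covariance formula).} I will first establish
\[
C=\frac1{(n+1)(n+2)}\sum_{i=1}^{n+1}v_iv_i^T .
\]
To do this, write a uniform random point of \(S\) as \(X=\sum_i\Lambda_iP_i\). Its barycentric vector \(\Lambda\) is uniform on the standard simplex, i.e.\ Dirichlet\((1,\ldots,1)\). This holds because the affine map from the standard simplex onto \(S\) has constant Jacobian. The Dirichlet moments are
\[
\mathbb E\,\Lambda_i\Lambda_j=\frac{1+\delta_{ij}}{(n+1)(n+2)},
\]
which follows from \(\int_{\Delta}\Lambda^\alpha=\alpha!/(n+|\alpha|)!\). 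Since \(\sum_i v_i=0\), we have \(X-G=\sum_i\Lambda_iv_i\), and hence
\[
C=\sum_{i,j}\mathbb E[\Lambda_i\Lambda_j]\,v_iv_j^T .
\]
The off-diagonal contribution is \(\frac1{(n+1)(n+2)}\bigl(\sum_i v_i\bigr)\bigl(\sum_j v_j\bigr)^T=0\). What survives is the diagonal term, which gives the formula. I expect this step to be the only real computation, and the place where the constant \((n+1)(n+2)\) must be checked carefully.

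\emph{Step 2 (linear algebra).} Let \(W\) be the \(n\times(n+1)\) matrix with columns \(v_i\). Nondegeneracy of \(S\) means \(W\) has rank \(n\). Since \(W\mathbf 1=0\), the kernel of \(W\) is exactly \(\operatorname{span}(\mathbf 1)\). By Step 1,
\[
C^{-1}=(n+1)(n+2)\,(WW^T)^{-1}.
\]
Moreover
\[
M-G=\sum_i\lambda_iv_i=\sum_i\mu_iv_i=W\mu,
\]
where the middle equality uses \(\sum_i v_i=0\). Therefore
\[
\frac{(M-G)^TC^{-1}(M-G)}{(n+1)(n+2)}=\mu^T\,W^T(WW^T)^{-1}W\,\mu .
\]
Here \(W^T(WW^T)^{-1}W\) is the orthogonal projection onto the row space of \(W\), which is \((\ker W)^\perp=\mathbf 1^\perp\). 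Because \(\sum_i\mu_i=0\), the vector \(\mu\) lies in \(\mathbf 1^\perp\), so the projection fixes it. The expression thus equals \(|\mu|^2=\sum_i\mu_i^2\), which completes the identity.

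As a consistency check, setting \(M\) on the John ellipsoid and using Theorem~\ref{thm:main} shows that this boundary is the level set \((M-G)^TC^{-1}(M-G)=n+2\) of the covariance quadratic. This agrees with the known description of the John ellipsoid of a simplex. The identity as stated holds for all \(M\), not only interior points; interiority is needed only so that the corner volumes make sense.
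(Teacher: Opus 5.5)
Your argument is correct and is essentially the paper's proof: the same centering $\lambda_i=\frac1{n+1}+\mu_i$, the same Dirichlet-moment computation giving $C=\frac1{(n+1)(n+2)}\sum_i v_iv_i^T$, and the same identity $M-G=W\mu$. Two small remarks on that part: what you call the ``off-diagonal'' part is really the all-pairs part coming from the constant $1$ in $1+\delta_{ij}$, which is why it factors as $(\sum_i v_i)(\sum_j v_j)^T$; and your projection argument via $W^T(WW^T)^{-1}W$ actually justifies the step $\sum_i\mu_i^2=(M-G)^T(BB^T)^{-1}(M-G)$ that the paper only asserts.

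One slip outside the proof is in your consistency check. There $\sum_i\lambda_i^2=\frac1n$ gives $(M-G)^TC^{-1}(M-G)=(n+1)(n+2)\bigl(\frac1n-\frac1{n+1}\bigr)=\frac{n+2}{n}$, not $n+2$. This matches the paper's values $2$ for triangles and $\frac53$ for tetrahedra. The level $n+2$ instead describes the ellipsoid with the same covariance as $S$ (the Legendre ellipsoid), which is $\sqrt n$ times the John ellipsoid.
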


\begin{proof}
Write
\[
\lambda_i=\frac1{n+1}+\mu_i,
\qquad \sum_i\mu_i=0.
\]
Then
\[
\sum_i\lambda_i^2=\frac1{n+1}+\sum_i\mu_i^2.
\]
Put \(v_i=P_i-G\) and let \(B\) be matrix with columns \(v_i\).  Since \(M-G=B\mu\) on the hyperplane \(\sum\mu_i=0\),
\[
\sum_i\mu_i^2=(M-G)^T(BB^T)^{-1}(M-G).
\]
To connect \(B\) and \(C\), let \(X\) be uniformly distributed on \(S\) and write its barycentric coordinates as \((\Lambda_1,\ldots,\Lambda_{n+1})\).  Then
\[
X-G=\sum_{i=1}^{n+1}\Lambda_i v_i.
\]
Since \(X\) is uniformly distributed on \(S\), its barycentric coordinates are
uniformly distributed on the standard simplex.  Their standard second-moment
formulas are
\[
\mathbb E(\Lambda_i^2)=\frac{2}{(n+1)(n+2)},
\qquad
\mathbb E(\Lambda_i\Lambda_j)=\frac{1}{(n+1)(n+2)}
\quad(i\ne j).
\]
It follows that
\begin{align*}
C
&=\mathbb E\bigl[(X-G)(X-G)^T\bigr]\\
&=\frac{1}{(n+1)(n+2)}
  \left(2\sum_i v_i v_i^T+\sum_{i\ne j}v_i v_j^T\right).
\end{align*}
Since \(\sum_i v_i=0\) we have
\[
\sum_{i\ne j}v_i v_j^T=-\sum_i v_i v_i^T.
\]
Therefore
\[
C=\frac{1}{(n+1)(n+2)}\sum_i v_i v_i^T
 =\frac{1}{(n+1)(n+2)}BB^T,
\]
or equivalently
\[
BB^T=(n+1)(n+2)C.
\]
Combining this identity with \eqref{eq:corner} we prove \eqref{eq:moment}.
\end{proof}

\begin{corollary}[Planar second-moment form]\label{cor:moment-plane}
Let \(S\) be a triangle of area \(T\), let \(G\) be its centroid, and let \(C\) be its covariance matrix for a uniform triangular lamina.  If \(T_1,T_2,T_3\) are the areas of the three corner triangles determined by the lines through an interior point \(M\) parallel to the sides, then
\begin{equation}\label{eq:moment-plane}
\frac{T_1+T_2+T_3}{T}
=
\frac13+\frac1{12}(M-G)^TC^{-1}(M-G).
\end{equation}
In particular,
\[
T_1+T_2+T_3=\frac12T
\quad\Longleftrightarrow\quad
(M-G)^TC^{-1}(M-G)=2,
\]
so the Steiner inellipse is the corresponding second-moment level ellipse.
\end{corollary}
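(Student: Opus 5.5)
The corollary is the case \(n=2\) of Proposition~\ref{prop:moment}, so I will specialize that identity and then read off the level set.

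\emph{Specialization.} With \(n=2\) the exponent \(2/n\) equals \(1\). The left side of \eqref{eq:moment} therefore becomes \((T_1+T_2+T_3)/T\), where \(T_i=V_i(M)\) and \(T=V\). On the right side the constants are \(1/(n+1)=1/3\) and \(1/((n+1)(n+2))=1/12\). This gives \eqref{eq:moment-plane} directly.

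I should check that the paper's objects match the planar ones. The corner triangles cut off by the lines through \(M\) parallel to the sides are the facet-parallel corner simplexes of the statement. The covariance of the uniform lamina is the matrix \(C\) of Proposition~\ref{prop:moment}. Beyond this matching, no extra computation is needed.

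\emph{The equivalence.} Write \(q=(M-G)^TC^{-1}(M-G)\). Then \(T_1+T_2+T_3=\tfrac12T\) holds if and only if
\[
\frac13+\frac{q}{12}=\frac12,
\]
that is, if and only if \(q/12=1/6\), i.e. \(q=2\). The algebra is linear in \(q\), so each step reverses and we get the biconditional.

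\emph{Identifying the Steiner inellipse.} The statement quantifies over interior \(M\). To say that the Steiner inellipse \emph{is} the level ellipse \(\{q=2\}\), I need two further facts.

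First, the whole closed set \(\{q\le 2\}\) lies in the triangle. One way is to argue that its boundary coincides with \(\partial E_{\rm St}\), which is contained in the closed triangle. I expect this to be the only delicate point, because it requires the level curve not to meet the exterior. Two routes handle it:
\begin{itemize}
\item[(a)] Apply the same formula with barycentric coordinates extended to all of \(\R^2\). The identity \(\sum\lambda_i^2=\tfrac13+\tfrac{q}{12}\) is purely algebraic and holds everywhere. By the Cauchy--Schwarz-type bound, \(\sum\lambda_i^2\le\tfrac12\) together with \(\sum\lambda_i=1\) forces every \(\lambda_i\ge0\), since the conic is the inscribed circle of the regular case transported affinely.
\item[(b)] Simply note that \eqref{eq:old}, or Theorem~\ref{thm:main} with \(n=2\), already identifies the interior solutions of \(T_1+T_2+T_3=\tfrac12T\) with \(\partial E_{\rm St}\). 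Hence \(\partial E_{\rm St}=\{q=2\}\cap\operatorname{int}S\) up to the three tangency points on the sides, which are handled by continuity.
\end{itemize}

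Second, combining either route with \eqref{eq:moment-plane} yields that the Steiner inellipse is the ellipse \((M-G)^TC^{-1}(M-G)=2\).
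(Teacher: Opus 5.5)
Your proposal is correct and takes the same route as the paper. The paper gives the corollary no separate proof: it is the case \(n=2\) of Proposition~\ref{prop:moment}, where the exponent becomes \(1\) and the constants become \(1/3\) and \(1/12\), and the level \(q=2\) is then read off against \eqref{eq:old}. Your extra check that \(\{q\le 2\}\) lies in the closed triangle goes beyond what the paper writes, and route (a) alone settles it: if some \(\lambda_i<0\), the other two coordinates sum to more than \(1\), so their squares by themselves already exceed \(1/2\).
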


\section{From Marden in the plane to second moments}
Now we look at two concrete examples.  For a triangle we first use the classical Siebeck--Marden theorem, and then recover the same ellipse from the centroid and second moment.  For a tetrahedron we work directly with the second-moment quadratic form.  This shows explicitly how the planar focal description gives way to a second-moment description, which works in higher dimensions.

\subsection{A triangle solved by the Siebeck--Marden theorem}
Let us consider the right triangle with vertices
\[
P_1=(0,0),\qquad P_2=(1,0),\qquad P_3=(0,1).
\]
We represent the vertices by complex numbers
\[
z_1=0,\qquad z_2=1,\qquad z_3=i,
\]
and form
\[
p(z)=\prod_{j=1}^3(z-z_j)=z(z-1)(z-i).
\]
The Siebeck--Marden theorem says that the two zeros of \(p'\) are the foci of the Steiner inellipse (Kalman~\cite{Kalman2008}).  Here
\[
p'(z)=3z^2-2(1+i)z+i,
\]
so
\[
f_\pm=\frac{1+i}{3}\pm\frac{1-i}{3\sqrt2}.
\]
Their midpoint is
\[
g=\frac{1+i}{3},
\]
which is the complex form of the centroid
\[
G=\left(\frac13,\frac13\right).
\]
In real coordinates the two foci are
\[
F_\pm=G\pm\frac1{3\sqrt2}(1,-1).
\]
The line through foci is therefore
\[
x+y=\frac23,
\]
and perpendicular line through their midpoint \(G\) is
\[
y=x.
\]
Thus the Siebeck--Marden theorem determines not only the center and foci, but also the two principal-axis lines of the Steiner inellipse.

\subsection{The same triangle solved by second moments}
Now we recover the same ellipse using only the centroid and second moment of the uniform triangular lamina.

For simplex in dimension \(n\),
\[
C=\frac1{(n+1)(n+2)}
\sum_{j=1}^{n+1}(P_j-G)(P_j-G)^T.
\]
For our triangle,
\[
C=\frac1{36}
\begin{pmatrix}
2&-1\\
-1&2
\end{pmatrix},
\qquad
C^{-1}=
\begin{pmatrix}
24&12\\
12&24
\end{pmatrix}.
\]
By Corollary~\ref{cor:moment-plane}, the Steiner inellipse is exactly
\[
(X-G)^TC^{-1}(X-G)=2.
\]
Writing \(X=(x,y)^T\) and \(G=(1/3,1/3)^T\), this becomes
\[
\begin{pmatrix}x-\frac13&y-\frac13\end{pmatrix}
\begin{pmatrix}24&12\\12&24\end{pmatrix}
\begin{pmatrix}x-\frac13\\y-\frac13\end{pmatrix}=2.
\]
Equivalently,
\begin{equation}\label{eq:triangle-explicit}
\left(x-\frac13\right)^2
+\left(x-\frac13\right)\left(y-\frac13\right)
+\left(y-\frac13\right)^2=\frac1{12},
\end{equation}
or, after expansion,
\[
x^2+xy+y^2-x-y+\frac14=0.
\]
Thus the second moment already gives us an explicit Cartesian equation of the ellipse.

Principal directions we get from
\[
C\binom{1}{-1}=\frac1{12}\binom{1}{-1},
\qquad
C\binom{1}{1}=\frac1{36}\binom{1}{1}.
\]
Hence the major axis is parallel to \((1,-1)\) and the minor axis is parallel to \((1,1)\).  Because both pass through \(G\), their equations are
\[
x+y=\frac23
\qquad\text{and}\qquad
y=x,
\]
respectively.  Covariance eigenvalues are \(1/12\) and \(1/36\), so squared semiaxis lengths are
\[
a^2=\frac16,\qquad b^2=\frac1{18}.
\]
When we compare with Subsection~3.1, eigenvector directions \((1,-1)\) and \((1,1)\) reproduce exactly the focal line \(x+y=2/3\) and its perpendicular \(y=x\).  Moreover
\[
a^2-b^2=\frac19
\]
and
\[
|f_\pm-g|^2
=\left|\frac{1-i}{3\sqrt2}\right|^2
=\frac19,
\]
so
\[
|f_\pm-g|^2=a^2-b^2.
\]
Thus the Marden computation and the independent second-moment computation recover the same center, axis directions, and focal relation, and therefore the same Steiner inellipse.  This comparison is a bridge from the planar focal description to the higher-dimensional second-moment method.

\subsection{A tetrahedron solved by second moments}
Now we consider
\[
P_1=(0,0,0),\quad P_2=(1,0,0),\quad
P_3=(0,1,0),\quad P_4=(0,0,1).
\]
Its centroid is
\[
G=\left(\frac14,\frac14,\frac14\right).
\]
The covariance (central second-moment) matrix of the uniform tetrahedron is
\[
C=\frac1{80}
\begin{pmatrix}
3&-1&-1\\
-1&3&-1\\
-1&-1&3
\end{pmatrix},
\]
so
\[
C^{-1}=
\begin{pmatrix}
40&20&20\\
20&40&20\\
20&20&40
\end{pmatrix}.
\]

For \(n=3\), Proposition~\ref{prop:moment} gives
\begin{equation}\label{eq:tetra-moment}
\sum_{i=1}^{4}\left(\frac{V_i(M)}{V}\right)^{2/3}
=\frac14+\frac1{20}(M-G)^TC^{-1}(M-G).
\end{equation}
On the boundary of John ellipsoid, Theorem~\ref{thm:main} gives
\[
\sum_{i=1}^{4}\left(\frac{V_i(M)}{V}\right)^{2/3}=\frac13.
\]
Hence the corner-volume formula, through the second-moment identity, produces the quadratic level
\[
(X-G)^TC^{-1}(X-G)=\frac53.
\]
Writing \(X=(x,y,z)^T\), the ellipsoid is therefore given explicitly by
\[
\begin{pmatrix}x-\frac14&y-\frac14&z-\frac14\end{pmatrix}
\begin{pmatrix}
40&20&20\\
20&40&20\\
20&20&40
\end{pmatrix}
\begin{pmatrix}x-\frac14\\y-\frac14\\z-\frac14\end{pmatrix}
=\frac53.
\]

The eigenvalues of \(C\) are
\[
\frac1{20},\qquad \frac1{20},\qquad \frac1{80}.
\]
The eigenspace corresponding to \(1/20\) is the plane perpendicular to \((1,1,1)\), while the eigendirection corresponding to \(1/80\) is \((1,1,1)\).  Equivalently, the matrix \(C^{-1}\) of the quadratic form has eigenvalues
\[
20,\qquad 20,\qquad 80,
\]
with the same eigendirections.  Therefore the squared semiaxis lengths are
\[
\frac1{12},\qquad \frac1{12},\qquad \frac1{48}.
\]
Corner volumes characterize the John ellipsoid intrinsically, while \eqref{eq:tetra-moment} converts this characterization into its usual quadratic equation.  Then eigenvectors and eigenvalues give principal directions and semiaxis lengths.

For completeness, here is the corner-volume form of this tetrahedral result.

\begin{corollary}[Tetrahedral corner-volume form]\label{cor:tetra}
Let \(S\) be a tetrahedron of volume \(V\).  Through an interior point \(M\), draw four planes parallel to its faces, and let \(V_1,V_2,V_3,V_4\) be the volumes of the four corner tetrahedra.  Then
\[
M\in\partial E_J(S)
\quad\Longleftrightarrow\quad
V_1^{2/3}+V_2^{2/3}+V_3^{2/3}+V_4^{2/3}
=\frac13V^{2/3}.
\]
\end{corollary}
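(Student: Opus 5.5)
This is the case \(n=3\) of Theorem~\ref{thm:main}, so the plan is to specialize that theorem and check that the constants come out right. I will also give a self-contained route through Proposition~\ref{prop:moment}, since that is the tetrahedral computation the paper has just carried out.

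\textbf{Step 1 (direct specialization).} For \(n=3\) the exponent \(2/n\) equals \(2/3\), and the right-hand side \(\frac1nV^{2/n}\) equals \(\frac13V^{2/3}\). The four corner tetrahedra cut off by the face-parallel planes through \(M\) are exactly the \(n+1=4\) facet-parallel corner simplexes in the theorem. Hence \eqref{eq:main} reads \(\sum_{i=1}^4 V_i^{2/3}=\frac13V^{2/3}\), which is the claim.

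\textbf{Step 2 (the underlying barycentric fact).} I want to check that nothing is lost in passing to this specialization. By \eqref{eq:corner}, \(V_i^{2/3}=\lambda_i^2V^{2/3}\), so the condition is equivalent to \(\sum_i\lambda_i^2=\frac13\). For a regular tetrahedron this is the insphere. The verification: the insphere is the sphere centered at the centroid \(\lambda_i=\frac14\) and touching each face \(\lambda_j=0\) at the face centroid, where \(\lambda_j=0\) and \(\lambda_i=\frac13\) for \(i\ne j\). In the regular case \(\sum\mu_i^2\) with \(\mu_i=\lambda_i-\frac14\) is a multiple of \(|M-G|^2\), because \(BB^T\) is scalar by symmetry. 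So the locus \(\sum_i\lambda_i^2=\text{const}\) is a sphere about \(G\). The touching point gives the constant \(3\cdot\frac19=\frac13\).

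\textbf{Step 3 (affine transfer).} An affine map \(A\) sending the regular tetrahedron onto \(S\) preserves barycentric coordinates. It maps the maximal-volume inscribed ellipsoid to the maximal-volume inscribed ellipsoid, because volume ratios are affine invariant. So it maps the insphere onto \(E_J(S)\), as in the proof of Theorem~\ref{thm:main}. Since both sides of the condition are expressed through the \(\lambda_i\) alone, the equivalence transfers.

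\textbf{Consistency check.} The criterion should agree with the explicit quadratic \((X-G)^TC^{-1}(X-G)=\frac53\) obtained from \eqref{eq:tetra-moment}. Indeed, \(\frac14+\frac1{20}\cdot\frac53=\frac13\).

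\textbf{Main obstacle.} The one nontrivial input is that the John ellipsoid of a regular simplex is its insphere. This holds by uniqueness of the John ellipsoid combined with symmetry: the ellipsoid must be invariant under the symmetry group, which acts irreducibly, so it is a ball centered at \(G\), and the largest such ball is the insphere. I will cite this as the paper does rather than reprove it.
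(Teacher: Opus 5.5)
Your proposal is correct and follows the paper's route. The corollary is just the case \(n=3\) of Theorem~\ref{thm:main}, whose proof is the same two steps you give: the regular-simplex identification of the insphere with \(\sum_i\lambda_i^2=\frac1n\), then affine transfer using invariance of barycentric coordinates. Your additional symmetry arguments (that \(BB^T\) is scalar, and that the John ellipsoid of a regular simplex is its insphere) fill in details the paper leaves to citation.
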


More generally, let \(c_1,c_2,c_3\) be the eigenvalues of the covariance matrix \(C\) of an arbitrary tetrahedron, with corresponding orthonormal eigenvectors \(e_1,e_2,e_3\), and write
\[
X-G=\xi_1e_1+\xi_2e_2+\xi_3e_3.
\]
The corner-volume identity and the John boundary condition give
\[
\frac{\xi_1^2}{c_1}+\frac{\xi_2^2}{c_2}+\frac{\xi_3^2}{c_3}=\frac53,
\]
or equivalently
\begin{equation}\label{eq:tetra-classification}
\frac{\xi_1^2}{(5/3)c_1}
+\frac{\xi_2^2}{(5/3)c_2}
+\frac{\xi_3^2}{(5/3)c_3}=1.
\end{equation}
Thus the covariance eigenvectors determine the three principal axes, and \((5/3)c_1,(5/3)c_2,(5/3)c_3\) are the squared semiaxis lengths.  This quadratic-form description is classical.  Its role here is only to show how the new corner-volume identity recovers this familiar description through the central second moment.

The examples above emphasize the conceptual chain
\[
\text{corner volumes}
\longleftrightarrow
\text{second moment}
\longleftrightarrow
\text{John ellipsoid}.
\]
\section{What is classical and what is new}
To make the scope of this note clear, we give an explicit list.  The following facts are classical, and we do not claim them as new:
the use of barycentric coordinates \cite{Coxeter1969}; the homothety relation \(V_i/V=\lambda_i^n\) for a corner simplex; the covariance formula for the uniform distribution on a simplex; affine equivariance of the John ellipsoid; and the description of the John ellipsoid of a simplex \cite{John1948,Ball1992,Ball1997,LinGeLeng2006}.  In particular, the cited results on the John ellipsoid determine the same ellipsoid by classical affine or quadratic methods.  The planar identity \eqref{eq:old} is also not new here: it was proved in \cite{Eydelzon2020}.

What is new is the volume-only formulation: we test a point by the volumes cut
off at the vertices, without first knowing an equation of the ellipsoid.  This
is the distinction made in the statements below.

The new contributions are the following:
\begin{enumerate}
\item Formula \eqref{eq:main}, which characterizes the boundary of the John ellipsoid of an arbitrary simplex only through the volumes of facet-parallel corner simplexes.
\item Formula \eqref{eq:moment}, which identifies the whole corner-volume functional with a central second-moment quadratic.  Its ingredients are classical, but this corner-volume formulation and its use in the present setting are new.
\item The recognition that the Steiner-inellipse identity \eqref{eq:old} is exactly the case \(n=2\) of \eqref{eq:main}.  The exponent \(2/n\) also explains why the planar formula has the corner areas themselves and not nontrivial powers of these areas.
\end{enumerate}
Corollary~\ref{cor:tetra} records the first higher-dimensional instance, the case of a tetrahedron.

\section{A companion root-volume identity}
The identity in this section is not separate from the main result.  Both come
from the same corner-volume relation \(V_i/V=\lambda_i^n\).  The power \(1/n\)
recovers the linear barycentric identity \(\sum_i\lambda_i=1\), while the power
\(2/n\) gives the quadratic identity that characterizes the John ellipsoid.
The linear identity also has a recursive consequence which does not appear in
the quadratic case.

The planar construction is classical.  It appears, for example, in a Soviet problem collection edited by Skanavi \cite{Skanavi1980}; the fourth edition was published in 1980.  Later it was recalled explicitly as an old problem in \cite{Eydelzon2020}.  Through an interior point of a triangle one draws lines parallel to the three sides, producing three similar corner triangles and three parallelograms.  The associated square-root area identity is therefore not new.  We include it because it has an immediate dimension-free form and because it puts the quadratic power sum in \eqref{eq:main} next to a natural linear one.  The recursive conservation statement below is a consequence of this classical identity; we record it without claiming any priority.

For a triangle of area \(T\), with corner triangles of areas \(T_1,T_2,T_3\), the classical identity is
\begin{equation}\label{eq:triangle-root-volume}
\sqrt{T_1}+\sqrt{T_2}+\sqrt{T_3}=\sqrt{T}.
\end{equation}
For a tetrahedron of volume \(V\), with corner tetrahedra of volumes \(V_1,V_2,V_3,V_4\), the corresponding formula is
\begin{equation}\label{eq:tetra-root-volume}
\sqrt[3]{V_1}+\sqrt[3]{V_2}+\sqrt[3]{V_3}+\sqrt[3]{V_4}=\sqrt[3]{V}.
\end{equation}
More generally, for every point \(M\) in an \(n\)-simplex, the corner-volume relation
\[
\frac{V_i(M)}{V}=\lambda_i^n
\]
gives
\[
\left(\frac{V_i(M)}{V}\right)^{1/n}=\lambda_i.
\]
Because barycentric coordinates satisfy \(\sum_{i=1}^{n+1}\lambda_i=1\), one obtains
\begin{equation}\label{eq:root-volume}
\sum_{i=1}^{n+1} V_i(M)^{1/n}=V^{1/n}.
\end{equation}

The same identity has a useful recursive interpretation.  Suppose that any corner simplex is itself treated by the same construction, with an arbitrary interior point chosen in this simplex, and that we continue this process through any finite number of refinements.  Let \(\mathcal L\) be the collection of terminal (unrefined) corner simplexes, and let \(V_\alpha\) be the volume of the terminal simplex \(S_\alpha\).

\begin{proposition}[Recursive root-volume conservation]
After any finite sequence of such corner refinements we have
\begin{equation}\label{eq:recursive-root-volume}
\sum_{\alpha\in\mathcal L} V_\alpha^{1/n}=V^{1/n}.
\end{equation}
\end{proposition}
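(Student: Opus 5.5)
We argue by induction on the number \(k\) of refinement steps performed, using the one-step identity \eqref{eq:root-volume} as the only geometric input. The key observation is that a single refinement replaces one terminal simplex by a family of simplexes whose \(n\)th-root volumes sum to the \(n\)th root of the volume of the simplex they replace. The quantity \(\sum_{\alpha} V_\alpha^{1/n}\) is therefore an invariant of the process.

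For the base case we interpret the initial configuration. If no refinement has been performed (\(k=0\)), the only terminal simplex is \(S\) itself, and \eqref{eq:recursive-root-volume} is trivial. If one prefers to start with the first construction applied to \(S\) at the point \(M\), the terminal family is \(\{S_1(M),\ldots,S_{n+1}(M)\}\), and \eqref{eq:root-volume} gives the claim directly.

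For the inductive step, suppose \eqref{eq:recursive-root-volume} holds for the terminal collection \(\mathcal L\) after \(k\) refinements. The next refinement picks some \(S_\beta\in\mathcal L\), which is a nondegenerate \(n\)-simplex because it is homothetic to its parent with positive ratio \(\lambda_i>0\), \(M\) being interior. It then picks an interior point \(M'\) of \(S_\beta\). The new terminal collection is
\[
\mathcal L'=(\mathcal L\setminus\{S_\beta\})\cup\{S_{\beta,1},\ldots,S_{\beta,n+1}\},
\]
where the \(S_{\beta,j}\) are the corner simplexes of \(S_\beta\) at \(M'\). Applying \eqref{eq:root-volume} to the simplex \(S_\beta\) and the point \(M'\) gives \(\sum_j \vol(S_{\beta,j})^{1/n}=V_\beta^{1/n}\). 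Hence
\[
\sum_{\alpha\in\mathcal L'}V_\alpha^{1/n}=\sum_{\alpha\in\mathcal L}V_\alpha^{1/n}=V^{1/n}.
\]

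There is essentially no hard step. The only points requiring care are bookkeeping ones. First, a refined simplex must be removed from the terminal collection and replaced by its children rather than counted alongside them, which is exactly what the definition of \(\mathcal L\) as the unrefined simplexes ensures. Second, \eqref{eq:root-volume} must be applicable at each stage, which requires the refined simplex to be nondegenerate and the chosen point to be interior. Both hold by the homothety remark above and by the hypothesis that each chosen point is interior. The order in which simplexes are refined is irrelevant, since each step preserves the sum.
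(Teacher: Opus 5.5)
Your proposal is correct and is essentially the paper's proof: induction on the number of refinements, with \eqref{eq:root-volume} applied to the refined simplex to show that each refinement replaces one term of the terminal sum by terms with the same total. Your extra bookkeeping (each child is nondegenerate because its homothety ratio is positive, the chosen point is interior, and the parent is removed from the terminal family) makes explicit what the paper leaves implicit.
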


\begin{proof}
If terminal simplex \(S_\alpha\) of volume \(V_\alpha\) is refined into its \(n+1\) corner simplexes \(S_{\alpha 1},\ldots,S_{\alpha,n+1}\), then
\eqref{eq:root-volume}, applied to \(S_\alpha\), gives
\[
V_\alpha^{1/n}
=\sum_{j=1}^{n+1}V_{\alpha j}^{1/n}.
\]
Thus a refinement replaces one term in the terminal sum by several terms with exactly the same total.  Starting from the single simplex \(S\), induction on the number of refinements proves \eqref{eq:recursive-root-volume}.
\end{proof}

So \(V^{1/n}\) is conserved under arbitrary finite recursive corner refinement.

In short, two natural power sums play complementary roles:
\[
\sum_{i=1}^{n+1}
\left(\frac{V_i(M)}{V}\right)^{1/n}=1
\qquad\text{for every }M\in S,
\]
whereas
\[
\sum_{i=1}^{n+1}
\left(\frac{V_i(M)}{V}\right)^{2/n}=\frac1n
\qquad\Longleftrightarrow\qquad M\in\partial E_J(S).
\]
The exponents are transparent in barycentric coordinates: \(1/n\) power gives back \(\lambda_i\), while \(2/n\) power gives back \(\lambda_i^2\).

\section*{Statements and Declarations}

\noindent\textbf{Funding.}
No funding was received for conducting this study.

\medskip
\noindent\textbf{Competing interests.}
The author has no relevant financial or non-financial interests to disclose.

\medskip
\noindent\textbf{Data availability.}
No datasets were generated or analyzed during the current study.

\end{document}